\theoremstyle{plain}
\newtheorem{theorem}{Theorem}
\newtheorem{proposition}[theorem]{Proposition}
\theoremstyle{remark}
\newtheorem{remark}[theorem]{Remark}
\newtheorem{example}[theorem]{Example}
\numberwithin{theorem}{section}
\newcommand{\F}{\mathbb{F}}
\newcommand{\Q}{\mathbb{Q}}
\newcommand{\Z}{\mathbb{Z}}
\newcommand{\Tr}{\textnormal{Tr}}
\newcommand{\Hom}{\textnormal{Hom}}
\newcommand{\End}{\textnormal{End}}
\newcommand{\mykill}[1]{}
\begin{document}

\bibliographystyle{plain}

\title{On  free elementary $\Z_p C_p$-lattices.}
\author{Gabriele Nebe}
\email{nebe@math.rwth-aachen.de}
\address{Lehrstuhl f\"ur Algebra und Zahlentheorie, RWTH Aachen University, 52056 Aachen, Germany}

\begin{abstract}
We show that all elementary lattices that 
are free $\Z_p C_p$-modules admit an orthogonal decomposition 
	into a sum of  free unimodular and $p$-modular $\Z_p C_p$ lattices.
	\\
	MSC: 11H56; 11E08 \\
	{\sc keywords:} quadratic forms over local rings; automorphism groups of lattices; free modules; Jordan decomposition; Smith normal form;\\
\end{abstract}

\maketitle

\section{Introduction}

Let $R := \Z_p C_p$ denote the group ring of the cyclic group of
order $p$ over the localisation of $\Z $ at the prime $p$. 
The present paper considers free $R$-lattices
$L\cong R^a$. 
The main observation in this situation is 
Theorem \ref{eltdiv}: 
Given two free $R$-modules $M$ and $L$ 
with 
$pM \subseteq L \subseteq M $ 
then there is an $R$-basis $(g_1,\ldots , g_a)$ of $M$ and $0\leq t\leq a$
such that $(g_1,\ldots , g_t, p g_{t+1},\ldots , p g_a )$ is an 
$R$-basis of $L$. 
So these lattices do admit a compatible basis. 
Applying this observation to Hermitian $R$-lattices shows
that free elementary Hermitian $R$-lattices admit an 
invariant splitting (see Theorem \ref{pmodautpfree}) 
as the orthogonal sum of a free unimodular lattice and a free $p$-modular
lattice.

The results of this note have
been used in the thesis \cite{Eisenbarth} to 
study extremal lattices admitting an automorphism of 
order $p$ in the case that $p$ divides the level
of the lattice.

\section{Existence of compatible bases} 

For a prime $p$ we denote by 
$$\Z_{p} := \{ \frac{a}{b} \in \Q \mid  p \mbox{ does not divide }  b \} $$
	the localisation of $\Z $ at the prime $p$. 
	The following arguments also apply accordingly 
	to the completion of this discrete valuation ring.
Let  $R:=\Z _{p} C_p$ denote the 
group ring of the cyclic group $C_p = \langle \sigma \rangle $ 
of order $p$. 
Then $e_1:=\frac{1}{p} (1+\sigma + \ldots + \sigma ^{p-1}) \in \Q C_p $
and $e_{\zeta }:=1-e_1$ are the primitive idempotents in the 
group algebra $\Q C_p$ with $\Q C_p = \Q C_p e_1 \oplus \Q C_p e_{\zeta } 
\cong  \Q \oplus \Q [\zeta _p]$, where $\zeta _p$ is a primitive $p$-th 
root of unity. The ring $T:=\Z_p[\zeta_p]$ is a discrete valuation 
ring in the $p$-th cyclotomic field $\Q[\zeta _p]$ with 
prime element $\pi := (1-\zeta _p)$ and hence 
$$Re_1 \oplus R e_{\zeta } \cong \Z _{p} \oplus \Z _{p} [\zeta _p] =: S \oplus T $$ 
is the unique maximal $\Z _p$-order in $\Q C_p$. 

\begin{remark} \label{structR}
With the notation above $T/(\pi ) \cong \Z_p / (p) \cong \F_p $ and 
via this natural ring epimorphism 
$$ R = \{ (x,y) \in  \Z _{p} \oplus \Z _{p} [\zeta _p] \mid 
x + p \Z_p = y + \pi \Z _{p} [\zeta _p]  \} .$$
$R$ is generated as $\Z_p$-algebra by 
$1=(1,1)$ and $1-\sigma = (0,\pi )$. 
Moreover $Re_1 \cap R = p R e_1 = p S$ and 
$Re_{\zeta } \cap R = \pi R e_{\zeta } = \pi T$ and 
	the radical $J(R) := pS \oplus \pi T$ of $R$ 
	is the unique maximal ideal of the
local ring $R$.
\end{remark} 

By \cite{Reiner} the indecomposable $R$-lattices are the free $R$-module $R$, 
the trivial $R$-lattice $\Z_p = Re_1=:S$ and 
the lattice $\Z_{p} [\zeta _p]=Re_{\zeta} =:T$
in the rational irreducible faithful representation of $C_p$. 
The theorem by Krull-Remak-Schmidt-Azumaya \cite[Chapter 1, Section 11]{FeitBook} ensures that 
any finitely generated 
$R$-lattice $L$ is a direct sum of indecomposable $R$-lattices
$$L \cong R^a \oplus T^b \oplus S^c .$$

In this note we focus on the case of free $R$-lattices. 
Though $R$ is not 
a principal ideal domain,
for certain sublattices of free $R$-lattices
there do exist compatible bases:

\begin{theorem} \label{eltdiv}
Let $M\cong R^a$ be a free $R$-lattice of rank $a$.
Assume that $L$ is a free $R$-lattice with $pM \subseteq L \subseteq M$. 
Then there is an $R$-basis $(g_1,\ldots , g_a)$ of $M=Rg_1\oplus \ldots \oplus Rg_a$
and $0\leq t\leq a$ such that 
$$L = Rg_1\oplus \ldots \oplus Rg_t \oplus pRg_{t+1} \oplus \ldots \oplus p R g_a .$$
\end{theorem}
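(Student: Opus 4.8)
The plan is to first split off the ``unimodular part'' of the inclusion $L\subseteq M$ by a Nakayama argument, reducing everything to a single ``mixed'' block that freeness of $L$ will force to equal $pM'$. For the splitting, set $V:=M/J(R)M\cong\F_p^a$ (using that $M$ is free and $R/J(R)\cong\F_p$), let $t:=\dim_{\F_p}\big((L+J(R)M)/J(R)M\big)$, and choose $m_1,\dots,m_t\in L$ whose images form a basis of $(L+J(R)M)/J(R)M$, extended to $m_1,\dots,m_a\in M$ whose images form a basis of $V$. By Nakayama $(m_1,\dots,m_a)$ is an $R$-basis of $M$. Writing $M=M''\oplus M'$ with $M''=Rm_1\oplus\dots\oplus Rm_t$ and $M'=Rm_{t+1}\oplus\dots\oplus Rm_a$, I would check directly that $M''\subseteq L$, that $L=M''\oplus L'$ with $L':=L\cap M'$, that $pM'\subseteq L'\subseteq J(R)M'$, and that $L'$ is free (a direct summand of the free module $L$ over the local ring $R$, hence projective, hence free). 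Everything then reduces to the claim: if $M'\cong R^b$ is free, $L'$ is free of rank $b$, and $pM'\subseteq L'\subseteq J(R)M'$, then $L'=pM'$; granting this, $L=Rm_1\oplus\dots\oplus Rm_t\oplus pRm_{t+1}\oplus\dots\oplus pRm_a$, which is the assertion with $g_i:=m_i$.

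To prove the reduced claim I would project along the idempotent $e_1$ onto the component $S=\Z_p$. From $L'\subseteq J(R)M'$ and $(J(R)M')e_1=pM'e_1$ (since $J(R)e_1=pS$) one gets $L'e_1\subseteq pM'e_1$, while $pM'\subseteq L'$ gives the reverse inclusion; hence $L'e_1=pM'e_1$. Now fix an $R$-basis $f_1,\dots,f_b$ of $M'$, so that $pf_1,\dots,pf_b\in pM'\subseteq L'$. Since $L'$ is free of rank $b$, the projection $L'\to L'e_1$ induces an isomorphism $L'/J(R)L'\xrightarrow{\sim}L'e_1/pL'e_1$ (it is always surjective, and both sides have $\F_p$-dimension $b$). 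Under it $[pf_j]$ maps to $\overline{p\,f_je_1}$, and as $\{p\,f_je_1\}_j$ is an $S$-basis of $pM'e_1=L'e_1$, the classes $[pf_1],\dots,[pf_b]$ form a basis of $L'/J(R)L'$. By Nakayama $pf_1,\dots,pf_b$ is therefore an $R$-basis of $L'$, whence $L'=\bigoplus_j Rpf_j=pM'$.

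\textbf{Main obstacle.} The single place where freeness of $L$ enters — and where it is genuinely indispensable — is the isomorphism $L'/J(R)L'\cong L'e_1/pL'e_1$, i.e.\ the fact that the mixed block $L'$ needs only $b$ generators; this is exactly what pins it down to $pM'$. Without freeness the statement is false: projecting instead along $e_\zeta$ onto the ramified ring $T=\Z_p[\zeta_p]$, a non-free $L'$ with $pM'\subseteq L'\subseteq J(R)M'$ may acquire intermediate elementary divisors $\pi^{c}$ with $1\le c\le p-1$, destroying the clean ``unimodular or $p$-modular'' dichotomy. I therefore expect the genuine work to be the verification that $L'$ is free together with the dimension count $\dim_{\F_p}L'/J(R)L'=b$; the remaining manipulations are bookkeeping with the conductor square $R\hookrightarrow S\oplus T$ of Remark~\ref{structR}.
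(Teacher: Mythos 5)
Your proof is correct, and while its first half parallels the paper, the decisive step is handled by a genuinely different argument. The reduction is essentially the same idea in two guises: the paper splits off free rank-one summands inductively, choosing $g_1\in L\setminus J(M)$ and writing $M=Rg_1\oplus M'$, $L=Rg_1\oplus L'$, which is the one-at-a-time version of your all-at-once Nakayama splitting $L=M''\oplus L'$; and both proofs then establish $L'e_1=pM'e_1$ by the identical computation with $J(R)e_1=pS$. The divergence is in proving $L'=pM'$. The paper works on the \emph{ramified} component: freeness gives that the kernel of $e_1$ on the lattice equals $\pi$ times its $e_{\zeta}$-image; taking $k$ maximal with this kernel contained in $\pi^k\tilde{T}$, an element $\ell$ with $\ell e_{\zeta}\notin\pi^k\tilde{T}$ is corrected by a suitable $pm$, $m \in M$, to land in the kernel while staying outside $\pi^k\tilde{T}$, a contradiction unless $k\geq p$; this yields $Le_{\zeta}\subseteq pMe_{\zeta}$, and an index count (both $L$ and $pM$ have index $p^a$ in $Le_1\oplus Le_{\zeta}$, by freeness) finishes. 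You never touch the $e_{\zeta}$-component: freeness enters solely through the dimension count $\dim_{\F_p}L'/J(R)L' = b = \dim_{\F_p}L'e_1/pL'e_1$, which upgrades the surjection induced by $e_1$ to an isomorphism, after which Nakayama promotes $pf_1,\ldots,pf_b$ to a generating set of $L'$. (Your implicit well-definedness check is fine, since $J(R)e_1=pS$ and $L'e_1$ is an $S$-module, and $L'e_1\cong S^b$ follows from $L'\cong R^b$ with $Re_1=S$.) What each buys: your route is shorter and rank-uniform --- no induction, no base case (the paper's chain-ring analysis of $R/pR\cong \F_p[x]/(x-1)^p$ for $a=1$), no elementary-divisor bookkeeping in $T=\Z_p[\zeta_p]$ --- and it isolates exactly where freeness is indispensable. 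The paper's route is more explicit about what fails otherwise: the rank-one chain $M\supset p\tilde{S}\oplus\pi\tilde{T}\supset p\tilde{S}\oplus\pi^2\tilde{T}\supset\ldots\supset pM$ exhibits concretely the non-free intermediate lattices with elementary divisors $\pi^{c}$, $1\leq c\leq p-1$, which you only allude to in your closing remark. Both are complete proofs of Theorem \ref{eltdiv}.
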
 

\begin{proof}
	Let $\tilde{S} := M e_1 $ and $\tilde{T} := M e_{\zeta }$. 
	Now $M \cong R^a$ is a free $R$-lattice, so, as in Remark \ref{structR},
	$M $ is a sublattice of 
	$\tilde{S}\oplus \tilde{T}$ of index $p^a$,
	$\tilde{S} \cap M = p \tilde{S} $, and 
	$\tilde{T} \cap M = \pi \tilde{T} $.
	The Jacobson radical is 
	$J(M) = J(R) M = p\tilde{S} \oplus \pi \tilde{T} $ 
	and of index $p^a$ in $M$.
We proceed by induction on $a$. \\
	If $a=1$, then $M=R$, $\tilde{S} \cong S$, $\tilde{T} \cong T$. 
As $M/pM \cong \F_pC_p  \cong \F_p[x]/(x-1)^p $ is a chain ring, 
the $R$-sublattices of $M$ that contain $pM$ form a chain: 
	$$M \supset
	p \tilde{S} \oplus \pi \tilde{T} 
\supset 
	p \tilde{S} \oplus \pi^2 \tilde{T} 
\supset \ldots \supset 
	p \tilde{S} \oplus \pi^{p-2} \tilde{T} \supset p \tilde{S} \oplus p \tilde{T} 
  \supset  pM .$$
The only free $R$-lattices among these are $M$ and $pM$.  \\
Now assume that $a>1$.
If $ L \not\subseteq J(M)$ then we may choose $g_1 \in L\setminus J(M)$.
As $g_1\not\in J(M)$ the $R$-submodule $Rg_1$ of $M$ is a free submodule of 
both modules $L$ and $M$, so $M=Rg_1 \oplus M'$, $L=Rg_1 \oplus L'$ where
$M'$ and $L'=L\cap M'$ are free $R$-lattices of rank $a-1$ satisfying the 
assumption of the theorem and the theorem follows by induction. 
So we may assume that 
	\begin{equation}\label{LJM} 
L\subseteq J(M) = p\tilde{S} \oplus \pi \tilde{T} .
	\end{equation} 
The element $e_1\in \Q C_p$ is a central idempotent in 
	$\End_R(J(M))$ projecting onto $p\tilde{S}= J(M) e_1$.
The assumption
	that $pM \subseteq L \subseteq J(M)$ implies that 
	$$p\tilde{S} =  pM e_1 \subseteq  L e_1 \subseteq  J(M) e_1 = p\tilde{S} . $$ 
So $L e_1 = pM e_1 =p\tilde{S}$. 

	To show that $L=pM$ we first show that $Le_{\zeta } = pM e_{\zeta }$. \\
	As $pM\subseteq L$ we clearly have that $pMe_{\zeta} \subseteq L e_{\zeta} $. \\
	To see the opposite inclusion put $K :=  L\cap L e_{\zeta} $
	to be the kernel of the projection $e_1: L \to Le_1$.
	As $L$ is free, we get as in Remark \ref{structR} 
	that $K = \pi Le_{\zeta } $.
	Let $k $ be maximal such that $ K \subseteq \pi^{k} \tilde{T}$. 
	Then $k\geq 2$ because $Le_{\zeta } \subseteq \pi \tilde{T}$
	(see equation \eqref{LJM}).
\\
Assume that $k \leq p-1$. 
There is $\ell \in L$ such that $y=\ell e_{\zeta} \not\in \pi^{k} \tilde{T} $.
As $pMe_1 = Le_1$, there is $m \in M $ such that $pm e_1= \ell e_1$. 
Now $pM\subseteq L$ so $pm \in L$ and
	$\ell-pm \in K= Ke_{\zeta }$. 
	\\
	We compute $\ell -pm = (\ell -pm ) e_{\zeta } = y - pm e_{\zeta }$.
	\\
As $pMe_{\zeta } = p \tilde{T} = \pi^{p-1}\tilde{T}$ and 
 $y\not\in \pi^{k}\tilde{T}$ the assumption that $k\leq p-1$ 
	shows that  $\ell -pm \not \in \pi^k \tilde{T}$, 
which contradicts the definition of $k $.
\\
Therefore $k \geq p$ and $L e_{\zeta } \subseteq pM e_{\zeta } $.
	\\
	Now $pM$ and $L$ both have index $p^a$ in 
	$pMe_1 \oplus pM e_{\zeta } = Le_1 \oplus Le_{\zeta }$
	(again by Remark \ref{structR} as $L$ and $M$ are free).
	So the assumption $pM \subseteq L$ implies that $pM = L$.
\end{proof}

\begin{remark}
	Let $M\cong T^b\oplus S^c$ and let $L$ be a sublattice of $M$
	again isomorphic to $T^b \oplus S^c$.
	Then $M=Me_{\zeta } \oplus Me_1$ and 
	 $L=Le_{\zeta } \oplus Le_1$. 
	 By the main theorem for 
	modules over principal ideal domains there is a 
	$T$-basis $(x_1,\ldots , x_b)$ of $Me_{\zeta} $ 
	and an $\Z_p$-basis $(y_1,\ldots, y_c) $ of $Me_1$, 
	as well as $0\leq n_1\leq  \ldots \leq n_b$, 
	 $0\leq m_1\leq \ldots \leq m_c$,  such that 
	 $L = \bigoplus _{i=1}^b \pi^{n_i} T x_i \oplus 
	  \bigoplus  _{i=1}^c p^{m_i} \Z_p y_i  .$
\end{remark} 

\begin{example} 
	For general modules $M$, however, Theorem \ref{eltdiv} has no 
	appropriate analogue.
	To see this consider $M\cong R\oplus S $ and choose a pseudo-basis
	$(x,y)$ of $M$ such that $x$ generates a free direct summand
	and $y$ its complement  isomorphic to $S$. 
Let $L$ be the $R$-sublattice generated by $p x e_1$ and 
	$x(1-\sigma ) + y$. As $x(1-\sigma )+y$ generates a free 
	$R$-sublattice of $M$ and $R(pxe_1)\cong S$ we have 
	$L  \cong S \oplus R $.
	For $p>2$ we compute that $pM\subseteq L \subseteq M$.  Then the 
	fact that $|M/L|=p^2$ implies that 
	these two modules do not admit a compatible pseudo-basis. 
\end{example}

\section{Lattices in rational quadratic spaces} \label{autp}
From now on we consider $\Z_{p} $-lattices 
$L$ in a non-degenerate rational bilinear space $(V,B)$.
The {\em dual lattice} of $L$ is
$$L^{\#} := \{ x\in V \mid B(x,\ell ) \in \Z_{p}  \mbox{ for all } \ell \in L \}.$$
The lattice $L$ is called {\em integral}, if 
$L \subseteq L^{\#} $ and {\em elementary}, 
if $$pL^{\#} \subseteq L \subseteq L^{\#} .$$
Following O'Meara \cite[Section 82 G]{OMeara} we call a 
lattice $L$ {\em unimodular} if $L=L^{\#}$ and 
{\em $p^j$-modular} if $p^jL^{\#} = L$.

We now assume that $\sigma $ is an automorphism of 
$L$ of order $p$, so 
 $\sigma $ is an orthogonal mapping of $(V,B)$ with $L \sigma  = L$.
Then also the dual lattice
$L^{\#} $ is a $\sigma $-invariant lattice in $V$. 
As the dual basis of a lattice basis of $L$ is a lattice basis of 
$L^{\# }$, 
the bilinear form $B$ yields an identification between 
$L^{\#}$ and the lattice $\Hom _{\Z_p}(L,\Z_p)$ 
of $\Z_p$-valued linear forms on $L$. 
The $\sigma $-invariance of $B$ shows that this is an isomorphism of
$\Z_p[\sigma ]$-modules.

\begin{remark}
	As a $\Z_{p}[\sigma ]$-module we have
	$L^{\#} \cong \Hom _{\Z_{p}} (L,\Z_{p}) $.
\end{remark} 

As all indecomposable $\Z_{p}[\sigma ]$-lattices are 
isomorphic to their homomorphism lattices, we obtain

\begin{proposition}\label{dualtype}  (see \cite[Lemma 5.6]{PreprintNebe}) 
	If $L\cong R^a\oplus T^b \oplus S^c $ as $\Z_{p}[\sigma ]$-lattice
	then also $L^{\#}  \cong R^a\oplus T^b \oplus S^c $.
\end{proposition}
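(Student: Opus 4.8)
The plan is to use the preceding Remark, which gives $L^\# \isom \Hom_{\Z_p}(L,\Z_p)$ as $\Z_p[\sigma]$-modules, together with the additivity of the functor $\Hom_{\Z_p}(-,\Z_p)$ and a check that each of the three indecomposable $R$-lattices is isomorphic to its own $\Z_p$-dual. Since $\Hom_{\Z_p}(-,\Z_p)$ carries finite direct sums to direct sums, applying it to $L \isom R^a \oplus T^b \oplus S^c$ yields
\[
L^\# \isom \Hom_{\Z_p}(R,\Z_p)^a \oplus \Hom_{\Z_p}(T,\Z_p)^b \oplus \Hom_{\Z_p}(S,\Z_p)^c
\]
as $\Z_p[\sigma]$-modules. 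By Krull--Remak--Schmidt--Azumaya the decomposition type of $L^\#$ is then determined once the type of $\Hom_{\Z_p}(X,\Z_p)$ is known for each indecomposable $X \in \{R,T,S\}$, so the whole statement reduces to showing that $R$, $T$ and $S$ are self-dual.

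Next I would dispose of the two easy cases. For the trivial lattice $S \isom \Z_p$, on which $\sigma$ acts as the identity, $\Hom_{\Z_p}(S,\Z_p) \isom \Z_p$ again carries the trivial action, so $\Hom_{\Z_p}(S,\Z_p) \isom S$. For the free lattice $R = \Z_p C_p$ I would invoke the fact that a group algebra over a commutative ring is a symmetric (Frobenius) algebra: the $\Z_p$-linear form picking out the coefficient of $1 \in C_p$ is a symmetrizing form, and it identifies $\Hom_{\Z_p}(R,\Z_p)$ with $R$ as a left $R$-module. Hence $\Hom_{\Z_p}(R,\Z_p) \isom R$.

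The remaining and most delicate case is $T = \Z_p[\zeta_p]$, on which $\sigma$ acts as multiplication by $\zeta_p$. Here I would use the trace pairing: the map $x \mapsto (t \mapsto \Tr_{T/\Z_p}(xt))$ identifies $\Hom_{\Z_p}(T,\Z_p)$ with the codifferent $\mathfrak{d}^{-1} = \{x \in \Q[\zeta_p] : \Tr_{T/\Z_p}(xT) \subseteq \Z_p\}$, a nonzero fractional $T$-ideal. Since $T$ is a discrete valuation ring, every nonzero fractional ideal is principal, so $\mathfrak{d}^{-1} \isom T$ as a $T$-module. The point that needs care is the $\sigma$-equivariance: tracing through the trace identification shows that $\sigma$ acts on $\Hom_{\Z_p}(T,\Z_p)$ by multiplication by $\zeta_p^{-1}$ rather than $\zeta_p$. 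Because $\zeta_p^{-1} = \zeta_p^{p-1}$ is again a primitive $p$-th root of unity, the Galois automorphism $\zeta_p \mapsto \zeta_p^{-1}$ of $T$ intertwines the two actions and produces an isomorphism $\Hom_{\Z_p}(T,\Z_p) \isom T$ of $\Z_p[\sigma]$-modules.

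The main obstacle is precisely this equivariance check in the $T$-case: it is immediate that the $\Z_p$-dual of $T$ is again a free rank-one $T$-module, but one must confirm that the $\sigma$-action dictated by the contragredient is conjugate, through a Galois symmetry, to the original action rather than a genuinely different one. Once the three self-dualities are in hand, collecting the summands gives $L^\# \isom R^a \oplus T^b \oplus S^c$, as claimed.
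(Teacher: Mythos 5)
Your proof is correct and follows essentially the same route as the paper: the paper deduces the proposition from the identification $L^{\#}\cong \Hom_{\Z_p}(L,\Z_p)$ in the preceding remark together with the fact that each indecomposable $\Z_p[\sigma]$-lattice is isomorphic to its own homomorphism lattice (a fact it cites rather than proves). Your argument simply supplies the details of that cited fact --- the symmetric-algebra identification for $R$, the trace/codifferent argument with the Galois twist $\zeta_p\mapsto\zeta_p^{-1}$ for $T$, and the trivial case $S$ --- which is a legitimate filling-in of the same approach, not a different one.
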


The group ring $R$ comes with a natural involution $\overline{\phantom{x}} $,
the unique $\Z_p$-linear map $\overline{\phantom{x}}:R \to R$
with $\overline{\sigma ^i} = \sigma ^{-i} $ for all $0\leq i\leq p-1$. 
 This involution 
is the restriction of the involution on 
the maximal order $S\oplus T$ that is trivial 
on $S$ and the complex conjugation on $T$. 

\begin{remark}\label{dual}
	The $\Z_p$-lattice $R$ is unimodular with respect to 
	the symmetric bilinear form 
	$$R\times R \to \Z_p , (x,y) \mapsto \frac{1}{p} \Tr _{reg} (x\overline{y} ) $$
	where $\Tr _{reg} : \Q C_p \to \Q $ denotes the regular trace of the
	$p$-dimensional $\Q $-algebra $\Q C_p$.
	We thus obtain a bijection between the 
	set of $\sigma $-invariant $\Z_p$-valued bilinear forms on 
	the $R$-lattice $L$ and the $R$-valued Hermitian forms on $L$:
	If $h:L\times L \to R $ is such a Hermitian form, then 
	$B=\frac{1}{p} \Tr _{reg} \circ h $ is a bilinear $\sigma $-invariant 
	form on $L$. As $R=R^{\#}$ these forms yield the 
	same notion of duality. 
In particular the dual lattice $L^{\# }$ of a 
free lattice $L = \oplus _{i=1}^a R g_i $ is again 
free  $L^{\#} = \oplus _{i=1}^a R g^*_i $ with the Hermitian dual basis 
	$(g^*_1,\ldots , g^*_a)$
as a lattice basis, giving a constructive argument for 
	Proposition \ref{dualtype} for free lattices.
\end{remark}

\section{Free elementary lattices}
\label{pautpmod} 
In this section we assume that $L$ is an 
elementary lattice and $\sigma$ 
an automorphism of $L$ of prime order $p$. 
Recall that $R$ is the commutative ring
$R:= \Z_{p} [\sigma ] $, so $L$ is an $R$-module.

\begin{theorem} \label{pmodautpfree}
	Let $p$ be a prime and let 
	$L$ be an elementary lattice with an automorphism 
	$\sigma $ such that 
	$L\cong R^a$ is a  free $R$-module.  Then also $L^{\#} \cong R^a$ and
there is an $R$-basis $(g_1,\ldots , g_a)$ 
	of $L^{\#}$ and $0\leq t \leq a$ such that 
$(g_1,\ldots , g_t, p g_{t+1},\ldots , p g_a ) $
is an  $R$-basis of $L $. 
In particular $L$ is the orthogonal sum of the 
unimodular  free $R$-lattice $L_0:=R g_1\oplus \ldots \oplus R g_t$ 
and a $p$-modular 
	free $R$-lattice $L_1 := L_0^{\perp }$.
\end{theorem}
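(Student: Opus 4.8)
The plan is to reduce Theorem \ref{pmodautpfree} to the already-proven Theorem \ref{eltdiv} by taking $M := L^{\#}$ and using the elementary hypothesis to place $L$ between $pM$ and $M$. First I would establish that $L^{\#}$ is free of rank $a$: by Proposition \ref{dualtype}, since $L \cong R^a$ (with $b=c=0$), we immediately get $L^{\#} \cong R^a$ as well. This is the cleanest way to obtain freeness of the dual, and Remark \ref{dual} gives a constructive version via the Hermitian dual basis if one prefers an explicit argument. Setting $M := L^{\#}$, the elementary hypothesis $pL^{\#} \subseteq L \subseteq L^{\#}$ reads exactly as $pM \subseteq L \subseteq M$, so the combinatorial structure theorem applies verbatim.

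\medskip

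Next I would invoke Theorem \ref{eltdiv} directly: there is an $R$-basis $(g_1,\ldots,g_a)$ of $M = L^{\#}$ and an index $0 \leq t \leq a$ such that
\begin{equation*}
L = Rg_1 \oplus \ldots \oplus Rg_t \oplus pRg_{t+1} \oplus \ldots \oplus pRg_a.
\end{equation*}
This already proves the first two assertions of the theorem. The remaining task is the geometric \emph{orthogonality} claim: that setting $L_0 := Rg_1 \oplus \ldots \oplus Rg_t$ and $L_1 := L_0^{\perp}$, the lattice $L$ decomposes as the orthogonal sum $L_0 \perp L_1$, with $L_0$ unimodular and $L_1$ being $p$-modular.

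\medskip

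The step I expect to be the main obstacle is passing from the \emph{module-theoretic} splitting produced by Theorem \ref{eltdiv} to a genuine \emph{orthogonal} splitting of the quadratic $R$-lattice, because the basis $(g_1,\ldots,g_a)$ coming from Theorem \ref{eltdiv} need not be orthogonal with respect to the Hermitian form. To handle this I would work with the Hermitian form $h$ on $L^{\#}$ from Remark \ref{dual} and analyze its Gram matrix in the basis $(g_1,\ldots,g_a)$. The key point is that $L_0 := \sum_{i \leq t} Rg_i$ is spanned by those basis vectors that are \emph{not} scaled by $p$ in passing to $L$; I would argue that the restriction of $h$ to $L_0$ is nondegenerate modulo the radical $J(R)$, so that $L_0$ is unimodular, and then split off $L_0$ using the standard orthogonalization over the local ring $R$: since $L_0$ is a unimodular direct summand, its Gram matrix is invertible over $R$, and one can replace each $g_{t+1},\ldots,g_a$ by its projection orthogonal to $L_0$ without leaving $L^{\#}$ (the orthogonalization coefficients lie in $R$ precisely because $L_0$ is unimodular). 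This yields $L^{\#} = L_0 \perp L_0^{\perp}$, whence $L = L_0 \perp (L \cap L_0^{\perp})$ and the complement $L_1 = L_0^{\perp} \cap L$ is spanned by $pg_{t+1}', \ldots, pg_a'$ for the orthogonalized vectors.

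\medskip

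Finally I would verify the modularity types. Unimodularity of $L_0$ follows because $L_0 \subseteq L$ is a unimodular summand of $L^{\#}$ on which $L$ and $L^{\#}$ agree. For $L_1$, the vectors $g_{t+1}',\ldots,g_a'$ form an $R$-basis of $L_0^{\perp} = L_1^{\#}$ while $pg_{t+1}',\ldots,pg_a'$ form an $R$-basis of $L_1$; hence $L_1 = pL_1^{\#}$, which is exactly the definition of a $p$-modular lattice. This completes the argument, with the orthogonalization over the local ring $R$ being the one genuinely nontrivial ingredient beyond the direct application of Theorem \ref{eltdiv}.
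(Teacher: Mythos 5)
Your proposal is correct, and its skeleton is exactly the paper's: set $M:=L^{\#}$, obtain $M\cong R^a$ from Proposition \ref{dualtype}, observe that the elementary condition reads precisely $pM\subseteq L\subseteq M$, apply Theorem \ref{eltdiv}, and split off $L_0=Rg_1\oplus \ldots \oplus Rg_t$ orthogonally. The one real divergence is how the splitting is achieved: the paper simply cites \cite[Satz 1.6]{Kneser} (unimodular free sublattices split off as orthogonal summands), whereas you reprove this inline by orthogonal projection over the local ring $R$. Your version is self-contained and pays a small dividend: the orthogonalized vectors $g'_{t+1},\ldots ,g'_a$ give an explicit $R$-basis of $L_1^{\#}=L_0^{\perp}\cap L^{\#}$ with $pg'_{t+1},\ldots ,pg'_a$ a basis of $L_1$, so the $p$-modularity $L_1=pL_1^{\#}$ is read off rather than asserted, as it is in the paper. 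One small point: the integrality of your projection coefficients needs, besides invertibility over $R$ of the Gram matrix $A$ of $L_0$, that the mixed products $h(g_j,g_i)$ for $j>t\geq i$ lie in $R$; this is automatic because $g_j\in L^{\#}$ and $g_i\in L$.

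The step you leave as a promise (``I would argue that the restriction of $h$ to $L_0$ is nondegenerate modulo the radical'') is in fact the only point with content beyond Theorem \ref{eltdiv}; the paper compresses it to ``clearly'', but your write-up should supply it, and your criterion is the right one. Concretely: let $G$ be the Gram matrix of $(g_1,\ldots ,g_a)$ with respect to $h$ and $D=\diag (1,\ldots ,1,p,\ldots ,p)$ with $t$ ones. The Hermitian dual basis of the $L$-basis $(g_1,\ldots ,g_t,pg_{t+1},\ldots ,pg_a)$ is expressed in terms of $(g_1,\ldots ,g_a)$ by the matrix $D^{-1}G^{-1}$ (up to the involution, which is harmless), so $L^{\#}=M$ says exactly that $D^{-1}G^{-1}\in \GL_a(R)$, equivalently $GD\in \GL_a(R)$. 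Writing $G$ in blocks $A$, $B$, $\overline{B}^{t}$, $C$ of sizes $t$ and $a-t$, the matrix $GD$ has blocks $A$, $pB$, $\overline{B}^{t}$, $pC$; since its entries lie in $R$, both $A$ and $B$ have entries in $R$, and reducing modulo $J(R)$ (note $p\in J(R)$) kills the block $pB$, leaving a block lower triangular matrix. Its invertibility over $R/J(R)\cong \F_p$ forces $\det A$ to be a unit of $R$, which is exactly the nondegeneracy modulo $J(R)$ you wanted; hence $L_0$ is unimodular and your Gram--Schmidt step goes through.
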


\begin{proof}
	Under the assumption both lattices 
$L$ and $M:=L^{\#}$ 
 are free $R$-modules satisfying $pM\subseteq L\subseteq M$. 
 So by Theorem \ref{eltdiv} there is a basis 
 $(g_1,\ldots , g_a)$ of $M$ such that 
$(g_1,\ldots g_t, pg_{t+1}, \ldots , pg_a)$ is a basis of $L$. 
Clearly $L$ is an 
integral lattice and $L_0:=Rg_1\oplus \ldots \oplus Rg_t$ is a unimodular
sublattice of $L$. By \cite[Satz 1.6]{Kneser} unimodular free sublattices
split as orthogonal summands, so 
$L=L_0 \perp L_1$ with $L_1^{\#} = \frac{1}{p} L_1$, i.e. 
$L_1$ is $p$-modular.
\end{proof}

Note that the assumption that the lattice is elementary is necessary, as 
the following example shows.

\begin{example}\label{dim2} 
	Let $L=R g_1 \oplus R g_2 $ be a free lattice of rank 2 with 
$R$-valued Hermitian form defined by the Gram matrix 
	$$\left(\begin{array}{cc} (p,0) & (0,\pi ) \\
	(0,\overline{\pi}) & (p,0) \end{array} \right) .$$
	Here we identify $R$ as a subring of $S\oplus T$, so 
	$(p,0) = p e_1 = 1+\sigma + \ldots + \sigma ^{p-1} $ and 
	$(0,\pi) = (0,(1-\zeta_p)) = 1-\sigma \in R $. 
	Then $L$ is orthogonally indecomposable, because $Le_{\zeta }$
	is an orthogonally indecomposable $T$-lattice, but $L$ 
	is not modular. 
	Note that the base change matrix between $(g_1,g_2)$ and 
	the dual basis, an $R$-basis of  $L^{\#}$, is 
	the inverse of the Gram matrix above, so 
	$$\left(\begin{array}{cc} (p^{-1},0) & (0,-\overline{\pi }^{-1} ) \\ 
	(0,-{\pi}^{-1}) & (p^{-1},0) \end{array} \right) .$$
	As $(1,0) = e_1 \not\in R$ this shows that $pL^{\#} \not\subseteq L$,
	so $L$ is not an elementary lattice. 
	\end{example}

\bibliography{dim36}

\end{document}